\numberwithin{equation}{section}
\theoremstyle{plain}
\newtheorem{thm}{Theorem}[section]
\newtheorem{lemma}{Lemma}[section]
\begin{document}

\title{Parameter estimation of high-dimensional linear differential equations}

%\thankstext{T1}{Footnote to the title with the `thankstext' command.}

\author{Heng Lian\\Nanyang Technological University\\
Division of Mathematical Sciences\\
Nanyang Technological University\\
Singapore, 637371\\
}
\maketitle

\begin{abstract}
We study the problem of estimating the coefficients in linear ordinary differential equations (ODE's) with a diverging number of variables when the solutions are observed with noise. The solution trajectories are first smoothed with local polynomial regression and the coefficients are estimated with nonconcave penalty proposed by \cite{fan01}. Under some regularity and sparsity conditions, we show the procedure can correctly identifies nonzero coefficients with probability converging to one and the estimators for nonzero coefficients have the same asymptotic normal distribution as they would have when the zero coefficients are known and the same two-step procedure is used. Our asymptotic results are valid under the misspecified case where linear ODE's are only used as an approximation to nonlinear ODE's, and the estimates will converge to the coefficients of the best approximating linear system. From our results, when the solution trajectories of the ODE's are sufficiently smooth, the parametric $\sqrt{n}$ rate is achieved even though nonparametric regression estimator is used in the first step of the procedure. The performance of the two-step procedure is illustrated by a simulation study as well as an application to yeast cell-cycle data.
\end{abstract}

\section{Introduction}
Ordinary differential equations are widely used to describe systems in physics, chemistry and biology. Many such systems can be described by the initial value problem
\begin{equation}\label{eqn:nonlinear}
\left\{\begin{array}{lll}
m'=F(m,\mathbf{\theta})\\
m(0)=m_0
\end{array}\right.
\end{equation}
where $m=(m_1,\ldots,m_p)^T$ represents the state of the system.
When some simple regularity conditions on the smoothness of $F$ are imposed, there exists a unique solution of the nonlinear ODE, at least in a small neighborhood of zero. Analytical insolvability of nonlinear equations necessitates numerical methods to find the solution for the initial value problem. On the other hand, the statisticians are concerned with the estimation of the parameters $\theta$ ($F$ is assumed known) given noisy solutions $Y_{ij}=m_j(X_{i})+\epsilon_{ij}, j=1,\ldots,p,$ observed at time points $X_1, X_2, \ldots, X_{n_i}$. This problem has been investigated by many authors. There exist roughly two classes of approaches. The first approach uses classical parametric inference, such as the nonlinear least square estimator or maximum likelihood estimator \cite{biegler86}. Optimization usually involves an iterative process. Starting from fixed initial values $m_0$, it finds the solution of (\ref{eqn:nonlinear}) using numerical methods such as Euler or Runge-Kutta based on the current parameter estimates. Similarly, inferences in \cite{gelman96} is based  on the Bayesian principle and the observations are modeled by, for example, $Y_{ij}\sim N(m_j(X_i),\sigma^2)$, which also requires numerically solving the of ODE's. Besides, MCMC should be used for posterior computation. If the initial values are unknown, they should also be considered as parameters and optimized together with $\theta$ in (\ref{eqn:nonlinear}).
The second family of approaches which is closely related to ours is to directly minimize deviation of $m'$ from $f(m,\theta)$. \cite{varah82} proposed a two-step method, in which $m$ is first estimated from noisy data using cubic splines and then $\int ||m'-f(m,\theta)||^2$ is minimized with respect to $\theta$. In this approach, numerical solution of ODE is not required and  unknown initial values do not add to computational burden. \cite{ramsay07} extends this approach using a single step method and optimizes the criterion that represents a trade-off between the fidelity to ODE and the data fit. The computations for the single-step approach are more involved than the two-step approach. 

We consider the simpler linear system of ODE's  with a large number of coefficient parameters.
\begin{equation}\label{eqn:linear}
\left\{\begin{array}{l}
m'=Am\\
m(0)=m_0
\end{array}\right.
\end{equation}
where $m=(m_1,\ldots,m_p)$ and A is the $p\times p$ coefficient matrix. Note that for simplicity we do not include a constant term in the system. The solution of this system of ODE's is well known and is determined by the spectrum of the matrix $A$, although a full discussion considering all possible cases is complicated when $p$ is large. We also regard the linear ODE's as an approximation to the truth so that $\{m_j\}$ is not necessarily the solution of (\ref{eqn:linear}). In high dimensions, linear approximations to nonlinear ODE's make more sense since specification of nonlinearity is a much complicated matter. In this case, $m_j$ is not necessarily an analytical function as in the linear system, but we will still assume it is sufficiently smooth later. 

If we  use a nonparametric estimator for the solution as well as its derivative, denoted by $\hat{m}$ and $\hat{m'}$, the fidelity to the ODE's can be assessed by 
\begin{equation}\label{eqn:lsq}
\int ||m'-Am||^2\,dx
\end{equation}
where $||\cdot ||$ denotes the Euclidean norm. Obviously each of the $p$ equations can be considered and fitted separately and the estimation problem is $p$ dimensional instead of $p^2$ when all parameters are considered together. 

Unlike the standard linear regression, even when $p$ is large, there still exists a unique solution for the least square problem (\ref{eqn:lsq}) under mild assumptions. But when $p$ is large, either because of a priori beliefs on the sparsity of the matrix or due to consideration of interpretability of the resulting model, regularized or penalized method is needed. For standard linear regression, Lasso \cite{tibshirani96} is probably the most popular method that uses the $L_1$ penalty
\begin{equation*}
||y-X\beta||^2+\lambda\sum_{i=1}^p|\beta_j|.
\end{equation*}
The $L_1$ penalty will force some of the coefficients to be equal to zero. Compared to traditional model selection method using information criteria, Lasso is continuous and thus more stable. More systematic theoretical studies on Lasso appeared later. \cite{greenshtein04} showed that Lasso is consistent for prediction, a property that was called persistency. Several authors \cite{meinshausen06,zhaoyu06} have shown that Lasso is in general not consistent for model selection unless some nontrivial conditions on the covariates are satisfied. Even when those conditions are satisfied, the efficiency of the estimator is compromised when one insists on variable selection consistency since the coefficients are over-shrinked. To address these shortcomings of Lasso, \cite{fan01} proposed the smoothly clipped absolute deviation (SCAD) penalty which is motivated by taking into account several desired properties of the estimator like continuity, asymptotic unbiasedness, etc. They also show that the resulting estimator possesses the oracle property, i.e. it is consistent for variable selection and behaves the same as  when the zero coefficients are known in advance. These results are extended to the case with a diverging number of covariates in \cite{fan04}. \cite{zou06} proposed adaptive lasso in the fixed $p$ case using a weighted $L_1$ penalty with weights determined by an initial estimator and similar oracle property followed. The idea behind the adaptive lasso is to assign higher penalty for zero coefficients and lower penalty for larger coefficients. \cite{huangma06} studied the adaptive lasso with a diverging number of parameters and proposed using marginal regression as the initial estimator under partial orthogonality assumption. Also in the high dimensional case, \cite{huangma08} showed similar oracle properties for the estimator with $L_\gamma$ penalty when $0<\gamma<1$. 

In this paper, we study the asymptotic properties within the framework of sparse linear ODE using the SCAD penalty. Since the $p$ equations are considered separately, we assume without loss of generality that we only want to estimate the first equation by minimizing
\begin{equation*}
\int (m'_1(x)-\beta^T m(x))^2\,dx+\sum_{j=1}^p p_{\lambda}(|\beta_j|)
\end{equation*}
using some estimator for $m$ and its derivatives. The SCAD penalty is defined by 
\begin{equation*}
p'_\lambda(\theta)=\lambda\{I(\theta\le\lambda)+\frac{(a\lambda-\theta)_+}{(a-1)\lambda}I(\theta>\lambda)\}~~\mbox{for some } a>2 \mbox{ and } \theta>0.
\end{equation*}
Other penalties like adaptive lasso and $L_\gamma$ discussed above will lead to similar asymptotic results, although  initial estimators are required in those cases.

The rest of the paper is organized as follows. Section 2 presents our two-step procedure using local polynomial regression as the solution estimator. The asymptotic property of the estimator is discussed. Under regularity conditions, we show the estimated coefficients still have parametric convergence rates even with nonparametric regression estimates from the first step plugged in. The oracle property is shown. In section 3, we conduct a simulation study to assess the finite sample performance and use a real dataset as an illustration of the procedure. We make some concluding remarks in section 4. The proofs are collected in section 5.

\section{Two-step estimator and its asymptotic properties}
\subsection{Two-step estimation}
For a general nonlinear system of ODE's (\ref{eqn:nonlinear}), we observe its solution with additive noise
\begin{equation*}
Y_{ij}=m_j(X_i)+\epsilon_{ij}, ~i=1,\ldots,n,~j=1,\ldots,p_n.
\end{equation*}
For simplicity of notation and proof, we assume the $n$ observation time points $\{X_i\}_{i=1}^n$ are i.i.d. from a uniform distribution on the interval $(0,1)$. The observation times for all $p_n$ variables are assumed to be the same. Although our estimator certainly works with different observation times for different variable $m_j$, the above assumption of identical time points makes the proof more transparent. Note that we consider the case where the number of variables diverges with the number of observations for each variable.

Although the observed noisy solution comes from possibly nonlinear ODE's, we use a linear system as an approximation for modeling. The true parameters (more precisely, the best approximating parameters) is defined to be
\begin{equation}\label{eqn:minA}
A_0=\arg\min_A \int_0^1||m'(x)-Am(x)||^2w(x)dx
\end{equation}
where $m=(m_1, \ldots, m_{p_n})$ and $w(\cdot)$ is a pre-determined nonnegative weight function. We assume that a unique minimizer for (\ref{eqn:minA}) exists. Since the minimum is obviously independently defined for each row of $A$, we only focus on the first row and denote it by $\beta_0$. Let $\beta_0=(\beta_{10}^T,\beta_{20}^T)^T$ with $\beta_{20}=0$, where $\beta_{10}$ is a vector of length $k_n$ and $\beta_{20}$ is a vector of length $p_n-k_n$. This is the usual sparsity assumption used in various papers on high-dimensional penalized regression.

When given only the noisy data $Y_{ij}$, we first estimate $m_j$ using nonparametric regression. In this paper, we use the local polynomial estimator \cite{fangijbels03}. 
In local polynomial regression, for a smooth function $m(x)$ with noisy observations $Y_i=m(X_i)+\epsilon_i, i=1,\ldots,n$, we model $m(x)$ around some point $x_0$ by
\begin{equation*}
m(x)\approx\sum_{d=0}^s \frac{m^{(d)}(x)}{d!}(x-x_0)^d
\end{equation*}
where $m^{(d)}$ is the $d$-th derivative of $m$.

This motivated the minimization of the following objective function
\begin{equation}\label{eqn:lpe}
\min_{\alpha}\sum_{i=1}^n(Y_i-\sum_{d=0}^s\alpha_d(X_i-x)^d)^2K(\frac{X_i-x}{h})
\end{equation}
where a kernel function $K(\cdot)$ with bandwidth $h$ is used for localization. Let $\hat{\alpha}=(\hat{\alpha}_1, \ldots, \hat{\alpha}_d)$ be the solution to the problem (\ref{eqn:lpe}). The local polynomial estimator for $m^{(d)}(x_0)$ is $\hat{m}^{(d)}(x_0)=d!\hat{\alpha}_d$. Note in this paper, even though we use the notation $\hat{m}^{(d)}(\cdot)$ to denote the estimator of the derivatives, it is different from the derivative of $\hat{m}(\cdot)$.

Denote by $X$ the design matrix
\begin{eqnarray*}
X=\left(\begin{array}{cccc}
1&(X_1-x_0)&\cdots&(X_1-x_0)^s\\
\vdots&\vdots&&\vdots\\
1&(X_n-x_0)&\cdots&(X_n-x_0)^s
\end{array}\right),
\end{eqnarray*}
and 
\begin{equation*}
y=\left(\begin{array}{c}
Y_1\\
\vdots\\
Y_n
\end{array}\right),
\end{equation*}
the problem (\ref{eqn:lpe}) can be written as 
\begin{equation*}
\min_\alpha (y-X\alpha)^TW(y-X\alpha).
\end{equation*}
where $W$ is the diagonal matrix with $K(\frac{X_i-x_0}{h})$ for the $i$-th diagonal element. The solution can be written in a closed form 
\begin{equation*}
\hat{\alpha}=(X^TWX)^{-1}X^TWy
\end{equation*}
Some algebra shows that $\hat{\alpha}_d$ can also be written as
\begin{equation*}
\hat{\alpha}_d(x_0)=\sum_{i=1}^nW_d(\frac{X_i-x_0}{h})Y_i
\end{equation*}
for some weight functions $W_d(\cdot)$ depending on both $x_0$ and $X_i$.

After applying local polynomial regression to observations $Y_{ij}, i=1,\ldots, n$ for each $j, j=1, \ldots, p_n$, we estimate the coefficients $\beta$ by minimizing the penalized least square objective function
\begin{eqnarray}
S(\beta)&:=&\int_0^1 (\hat{m}_1(x)-\beta^T m(x))^2 w(x)dx + \sum_jp_{\lambda_n}(|\beta_j|)\label{eqn:obj}\\
\hat{\beta}&=&\arg\min_\beta S(\beta)\nonumber
\end{eqnarray}
where $\lambda_n$ is the smoothing parameter for the SCAD penalty $p_\lambda(\cdot)$. From the form of the objective function, $\hat{\beta}$ only depends on the nonparametric estimators through the functionals $\int \hat{m}_i(x)\hat{m}_j(x)w(x)dx$ and $\int \hat{m}_i(x)\hat{m}'_1(x)w(x)dx, i,j=1,\ldots, p_n$. Similar functionals are studied by different authors in the context of nonparametric density estimation or regression. For estimation of a density, say $f$, \cite{hall87} and \cite{bickel88} discuss estimation of $\int [f^{(d)}(x)]^2 dx$ using kernel estimator. \cite{laurent96,laurent97} uses series projection to estimate functionals of more general forms. In the context of regression, \cite{doksum95} gives estimator of $\int [m(x)]^2dx$ using kernel regression, and \cite{huangfan99} investigated the estimation of $\int [m^{(d)}(x)]^2 dx$ using local polynomial regression. 

\subsection{Asymptotic properties}
Before we present the first result, we need some notations. Denote the $p_n\times (p_n+1)$ matrix of integral functionals
\begin{eqnarray}\label{eqn:M}
&&M=\left[\begin{array}{ccccc}
\int m_1'm_1w&\int m_1m_1w&\int m_1m_2w&\ldots &\int m_1m_{p_n}w\\
\int m_1'm_2w&\int m_2m_1w&\int m_2m_2w&\ldots &\int m_2m_{p_n}w\\
\vdots       &\vdots       &\vdots       &\vdots       &\vdots\\
\int m_1'm_{p_n}w&\int m_{p_n}m_1w&\int m_{p_n}m_2w&\ldots &\int m_{p_n}m_{p_n}w\\
\end{array}\right] .
\end{eqnarray}
The corresponding matrix with local polynomial estimators plugged in is denoted by $\hat{M}$. For any $p\times (p+1)$ matrix $C$, let 
\begin{equation*}
vech(C)=(c_{11},c_{21},\ldots,c_{p1},c_{12},c_{22},\ldots,c_{p2},c_{23},\ldots c_{p+1,p})^T
\end{equation*}
be its vectorized version in $p(p+3)/2$ dimensions. Thus $vech(M)$ and $vech(\hat{M})$ contains all the nonrepetitive elements in the two matrices. Let $vec(C)$ be the usual vectorization of matrix $C$ in $p(p+1)$ dimensions. Obviously, there exists a binary matrix $\Phi_p$ such that $vec(C)=\Phi_p vech(C)$. From (\ref{eqn:M}), we can write the matrix of integral functional as $M=[b,Q]$, where $b$ is a $p_n$ dimensional vector consisting of functionals of the form $\int m_1'm_iw$ and $Q$ is the $p_n\times p_n$ matrix containing functionals of the form $\int m_im_jw$. Similarly we write $\hat{M}=[\hat{b},\hat{Q}]$.

Now we can state the regularity conditions for the consistency and oracle property of the SCAD penalized estimator.
\begin{enumerate}
\item[(A)] The kernel $K$ is a continuous bounded symmetric density function supported on $[-1,1]$.
\item[(B)] The true solution of nonlinear ODE's, $m_j, i=1,\ldots,p_n,$ is three times continuously differentiable, and local polynomial estimator used is of order $s=3$.
\item[(C)] The weight function $w$ is bounded and nonnegative, with $w^{(i)}(0)=w^{(i)}(1)=0, i=0,1,2$.
\item[(D)] The errors $\epsilon=(\epsilon_1^T,\ldots,\epsilon_{p_n}^T)^T$, with $\epsilon_j=(\epsilon_{1j}, \ldots, \epsilon_{nj})^T$ denoting the noises associated with $m_j$, are independent and identically distributed as $N(0,\sigma^2)$.
\item[(E)] $nh^6\rightarrow \infty, nh^4\rightarrow 0, \sqrt{n}p_n(h^3+\frac{1}{nh^2})\rightarrow 0$.
\item[(F)] The eigenvalues of the matrix $Q$ satisfies
	\begin{equation*}
		0<\rho_{1n}\le\lambda_{\min}(Q)\le\lambda_{\max}(Q)\le\rho_{2n}.
	\end{equation*}
\item[(G)] $\frac{p_n}{\sqrt{n}}=o(\rho_{1n})=o(\rho_{2n}), \lambda_n\rightarrow 0, \frac{p_n^2}{\sqrt{n}\rho_{1n}\lambda_n}\rightarrow 0, \frac{\rho_{2n}p_n^2}{\sqrt{n}\rho_{1n}\lambda_n}\rightarrow 0.$
\item[(H)] 
The nonzero coefficients, $\beta_{10}=(\beta_{01},\beta_{02},\ldots,\beta_{0k_n})^T$ satisfy 
\begin{equation*}
\max_{1\le j\le k_n}|\beta_{0j}|\le C, \mbox{ for some constant $C$ independent of $n$.}
\end{equation*}

\item[(\,I\,)]
\begin{equation*}
\min_{1\le k\le k_n}|\beta_{0k}|/\lambda_n\rightarrow \infty.
\end{equation*}

\end{enumerate}

Condition (A) is standard for local polynomial regression for estimation of curves with its derivatives. Noncompactly supported kernel can be used with increased technical complication. Condition (B) ensures that the parametric $\sqrt{n}$ convergence rate is achieved for integral functionals. The main purpose of using a weight function $w$ is to address undesirable boundary effect in local polynomial regression and to make the proof cleaner. Conditions $(G)-(I)$ are used to ensure the consistency and the oracle property of the final estimates which is standard in the high-dimensional regression literature.

\begin{thm}\label{th:an}
(Asymptotic normality of integral functional estimates) Suppose that conditions (A)-(E) holds. Then $\gamma_n^TG_n^{-1/2}(vech(\hat{M})-vech(M)\stackrel{d}{\rightarrow}N(0,1)$ for any $p_n(p_n+3)/2$ dimensional vector $\gamma_n$ with $||\gamma_n||=1$, where $\stackrel{d}{\rightarrow}$ means convergence in distribution and $G_n$ is the $\frac{p_n(p_n+3)}{2}\times \frac{p_n(p_n+3)}{2}$ asymptotic covariance matrix of $vech(\hat{M})$ which can be obtained from Lemma \ref{lem:bv} in section 5. 
\end{thm}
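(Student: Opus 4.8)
The plan is to condition on the design points $X_1,\dots,X_n$ and exploit the fact that, given the design, each local polynomial estimate is an \emph{exactly} linear functional of the Gaussian errors. First I would record the linear-in-data representation $\hat{m}_j^{(d)}(x)=\sum_i W_d((X_i-x)/h)Y_{ij}$ and substitute $Y_{ij}=m_j(X_i)+\epsilon_{ij}$ to split each estimate as $\hat{m}_j^{(d)}(x)=\tilde{m}_j^{(d)}(x)+e_j^{(d)}(x)$, where $\tilde{m}_j^{(d)}$ depends only on the design and $e_j^{(d)}(x)=\sum_i W_d((X_i-x)/h)\epsilon_{ij}$ is linear in $\epsilon$. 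Since every entry of $\hat{M}$ is an integral of a product of two such estimates against $w$, expanding the product produces, entrywise, a deterministic term, a term linear in $\epsilon$, and a term bilinear (quadratic on the diagonal) in $\epsilon$. Collecting these gives
\[
vech(\hat{M})-vech(M)=B_n+L_n+R_n,
\]
with $B_n$ deterministic (approximation/bias), $L_n$ linear in $\epsilon$, and $R_n$ the bilinear remainder.

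Next I would treat the linear term $L_n$, which carries the limit. Writing $L_n=\sum_{i,j}\epsilon_{ij}\,v_{ij}$ for design-dependent vectors $v_{ij}$, the scalar $\gamma_n^TG_n^{-1/2}L_n=\sum_{i,j}\epsilon_{ij}\,(\gamma_n^TG_n^{-1/2}v_{ij})$ is, conditionally on the design, an exact linear combination of i.i.d.\ $N(0,\sigma^2)$ variables, hence exactly $N(0,\tau_n^2)$ with $\tau_n^2=\sigma^2\sum_{i,j}(\gamma_n^TG_n^{-1/2}v_{ij})^2$. Because $G_n$ is, by Lemma~\ref{lem:bv}, the asymptotic covariance of the leading linear part, a law of large numbers over the design gives $\tau_n^2\rightarrow 1$ in probability for any sequence with $\|\gamma_n\|=1$; applying dominated convergence to the conditional characteristic function $E[\exp(it\gamma_n^TG_n^{-1/2}L_n)\mid X]=\exp(-t^2\tau_n^2/2)$ then yields $\gamma_n^TG_n^{-1/2}L_n\stackrel{d}{\rightarrow}N(0,1)$. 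Note this replaces a genuine triangular-array CLT by exact conditional Gaussianity plus concentration of the conditional variance.

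It remains to show $B_n$ and $R_n$ are negligible after the normalization $G_n^{-1/2}$, whose operator norm is of order $\sqrt{n}$ since the eigenvalues of $G_n$ are of order $1/n$. For the bias I would invoke the bounds of Lemma~\ref{lem:bv}: condition (B) (third-order fit on $C^3$ solutions) together with condition (C), which removes boundary effects by forcing $w$ and its first two derivatives to vanish at the endpoints, controls the smoothing bias of each functional, and $nh^4\rightarrow0$ and $\sqrt{n}p_n h^3\rightarrow0$ in (E) give $\sqrt{n}\,\|B_n\|\rightarrow0$, whence $\gamma_n^TG_n^{-1/2}B_n\rightarrow0$. For $R_n$ I would bound its variance using the variance orders from Lemma~\ref{lem:bv}: the off-diagonal pieces $\int e_ie_jw$ ($i\neq j$) are mean-zero bilinear forms in independent Gaussians, while $\int e_i^2w$ and the derivative piece $\int e_1'e_1w$ contribute systematic means of order $1/(nh)$ and $1/(nh^2)$ plus mean-zero fluctuations. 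The condition $nh^6\rightarrow\infty$ controls the $h^{-3}$-inflated variance of the derivative estimates, $\sqrt{n}p_n/(nh^2)\rightarrow0$ in (E) kills the quadratic mean contribution, and a Chebyshev bound gives $\gamma_n^TG_n^{-1/2}R_n\rightarrow0$ in probability. Slutsky's theorem then combines the three parts.

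I expect the bilinear remainder $R_n$ to be the main obstacle, for two reasons. First, unlike $L_n$ it is not Gaussian, so it must be shown to vanish rather than absorbed into the limit, and this is exactly where the delicate balance $n^{-1/6}\ll h\ll n^{-1/4}$ enforced by $nh^6\rightarrow\infty$ and $nh^4\rightarrow0$ becomes essential, the derivative functionals having variance inflated by $h^{-3}$. Second, because $p_n$ diverges, every bound must hold uniformly over the roughly $p_n^2$ entries and over all unit directions $\gamma_n$, which is precisely why factors of $p_n$ appear in (E); showing that the normalized remainder is negligible simultaneously in all directions, rather than entry by entry, is the crux of the argument.
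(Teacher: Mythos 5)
Your proposal follows essentially the same route as the paper's own proof: the same decomposition of each entry of $vech(\hat{M})-vech(M)$ into a deterministic smoothing bias, a term linear in the Gaussian errors, and a quadratic/bilinear remainder, with the linear term dominating (the ``pseudo-quadratic'' situation borrowed from Huang and Fan), conditional Gaussianity giving the limit, and the remainder handled exactly as in the paper via $\|G_n^{-1/2}\|=O_p(\sqrt{n})$ and the bound $O_p\bigl(\sqrt{n}\,p_n(h^3+\tfrac{1}{nh^2})\bigr)=o_p(1)$ from condition (E). The only substantive differences are that you spell out the conditional characteristic-function argument the paper dismisses as ``obvious,'' and that the bandwidth window $n^{-1/6}\ll h\ll n^{-1/4}$ you quote from (E) is literally empty --- an inconsistency in the paper's stated condition (compare its later suggestion $h=O(n^{-1/5})$) rather than a gap in your argument.
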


As presented in Lemma \ref{lem:bv} in section 5, the entries of the covariance matrix $G_n$ is of order $O(1/n)$, thus the rate of convergence for $vech(\hat{M})$ is $\sqrt{n}$. We note that we intentionally presented only the much simplified version of asymptotic normality. When functions $m_j, j=1,\ldots,p_n$ are not smooth enough, or the kernel bandwidth is chosen differently, or a lower order polynomial is used in nonparametric regression, it is possible to obtain asymptotic normality with slower rates, or with nonvanishing asymptotic bias. When this is the case, the following asymptotic results for $\hat{\beta}$ should be modified accordingly. In particular, the convergence rate of $\hat{\beta}$ depends critically on the convergence rate of $vech({\hat{M}})$. In Theorem \ref{th:consistency} we state the existence of a local minimizer in a neighborhood of the true parameter. Consistency of the global minimizer can be proved using peeling device as demonstrated in \cite{huangma06,huangma08}.

\begin{thm}\label{th:consistency}
(Local consistency) There exists a local minimizer $\hat{\beta}$ of $S(\beta)$ such that $||\hat{\beta}-\beta_0||=O_p(\frac{p_n^2}{\sqrt{n}\rho_{1n}})$, when conditions (A)-(H) holds.
\end{thm}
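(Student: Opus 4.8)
The plan is to follow the standard device of \cite{fan01,fan04}. Set $\alpha_n=p_n^{2}/(\sqrt n\,\rho_{1n})$ and show that for every $\epsilon>0$ there is a constant $C$, large but fixed, such that for all large $n$
\[
P\Bigl(\inf_{\|u\|=C} S(\beta_0+\alpha_n u)>S(\beta_0)\Bigr)\ge 1-\epsilon .
\]
Since $S$ is continuous, this forces a local minimizer of $S$ to lie in the ball $\{\beta_0+\alpha_n u:\|u\|\le C\}$ on an event of probability at least $1-\epsilon$, which is exactly the assertion $\|\hat\beta-\beta_0\|=O_p(\alpha_n)$. Thus the whole proof reduces to signing the increment $D_n(u):=S(\beta_0+\alpha_n u)-S(\beta_0)$ uniformly over the sphere $\|u\|=C$.

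First I would expand the quadratic loss. Writing $\hat M=[\hat b,\hat Q]$ and using that the population minimizer $\beta_0$ satisfies the normal equation $Q\beta_0=b$, the cross terms telescope and I get
\[
D_n(u)=2\alpha_n\, u^{T}\bigl[(\hat Q-Q)\beta_0-(\hat b-b)\bigr]+\alpha_n^{2}\, u^{T}\hat Q\,u+\Delta(u),
\]
where $\Delta(u)=\sum_j\{p_{\lambda_n}(|\beta_{0j}+\alpha_n u_j|)-p_{\lambda_n}(|\beta_{0j}|)\}$. I would dispose of $\Delta(u)$ at once: condition (G) gives $\alpha_n=o(\lambda_n)$, while condition (I) places every nonzero $|\beta_{0j}|$, and every nearby $|\beta_{0j}+\alpha_n u_j|$, in the flat region $(a\lambda_n,\infty)$ of the SCAD penalty, so those terms vanish identically; the remaining terms correspond to $\beta_{0j}=0$ and equal $p_{\lambda_n}(|\alpha_n u_j|)\ge0$. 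Hence $\Delta(u)\ge0$ and may be dropped when lower-bounding $D_n(u)$.

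Next I would control the two loss pieces using Theorem \ref{th:an} and Lemma \ref{lem:bv}, which give that every entry of $\hat M-M$ has variance $O(1/n)$. For the quadratic term the Frobenius bound yields $\|\hat Q-Q\|_{\mathrm{op}}\le\|\hat Q-Q\|_F=O_p(p_n/\sqrt n)=o_p(\rho_{1n})$ by condition (G); combined with $\lambda_{\min}(Q)\ge\rho_{1n}$ from condition (F) this gives $u^{T}\hat Q\,u\ge(\rho_{1n}-\|\hat Q-Q\|_{\mathrm{op}})\|u\|^{2}\ge\tfrac12\rho_{1n}\|u\|^{2}$ on an event of probability tending to one, so the quadratic term is at least $\tfrac12\alpha_n^{2}\rho_{1n}C^{2}$. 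For the linear term, Cauchy--Schwarz together with $\|\hat b-b\|=O_p(\sqrt{p_n}/\sqrt n)$ and $\|(\hat Q-Q)\beta_0\|\le\|\hat Q-Q\|_{\mathrm{op}}\|\beta_0\|$, using the sparsity bound $\|\beta_0\|=O(\sqrt{k_n})$ from condition (H), shows the linear term is bounded in modulus by $\alpha_n C\,\eta_n$ with $\eta_n=O_p(p_n^{3/2}/\sqrt n)$. The quadratic term dominates as soon as $\alpha_n\rho_{1n}\gg\eta_n$; since $\alpha_n\rho_{1n}=p_n^{2}/\sqrt n$, the stated rate comfortably secures this, so for $C$ large and $n$ large $D_n(u)>0$ uniformly on $\|u\|=C$ with probability at least $1-\epsilon$, which is the claim.

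The main obstacle is the high-dimensional, \emph{random} and possibly ill-conditioned design. Unlike the usual penalized-regression setting, where the Gram matrix converges to a fixed positive-definite information matrix, here $\hat Q$ must be shown to inherit the eigenvalue floor $\rho_{1n}$ of $Q$, and that floor may itself shrink; this is precisely what forces the operator-norm estimate $\|\hat Q-Q\|_{\mathrm{op}}=o_p(\rho_{1n})$ and is where conditions (F)--(G) are really consumed. Equally delicate is the bookkeeping of the power of $p_n$ in the linear (gradient) perturbation, since all $O(p_n^{2})$ functional errors are available only through the second-moment control of Lemma \ref{lem:bv}; these must be aggregated through the Frobenius norm and the sparsity of $\beta_0$ rather than used entrywise, and it is this aggregation, balanced against the quadratic term, that produces the rate $\alpha_n=p_n^{2}/(\sqrt n\,\rho_{1n})$.
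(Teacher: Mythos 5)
Your proposal follows essentially the same route as the paper's own proof: the Fan--Peng device of signing $S(\beta_0+\alpha_n u)-S(\beta_0)$ uniformly over a sphere, the telescoping of the quadratic loss via the normal equation $Q\beta_0=b$, a lower bound $u^T\hat Q u\ge \tfrac12\rho_{1n}\|u\|^2$ obtained from an operator-norm control of $\hat Q-Q$, and domination of the linear perturbation term. The only cosmetic differences are that the paper gets the eigenvalue floor from the Gershgorin circle theorem (Lemma \ref{lem:eigen}) where you use the Frobenius bound $\|\hat Q-Q\|_{\mathrm{op}}\le\|\hat Q-Q\|_F=O_p(p_n/\sqrt n)$ (equivalent for this purpose, and your aggregation with $\|\beta_0\|=O(\sqrt{k_n})$ even gives the slightly sharper linear-term bound $p_n^{3/2}/\sqrt n$ versus the paper's cruder $O_p(p_n^2/\sqrt n)$; both suffice for the stated rate).

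The one substantive discrepancy is your treatment of the penalty. You kill the nonzero-coefficient terms by placing both $|\beta_{0j}|$ and $|\beta_{0j}+\alpha_n u_j|$ in the flat region of the SCAD penalty, and that step needs $\min_j|\beta_{0j}|/\lambda_n\to\infty$, i.e.\ condition (I) --- but the theorem as stated assumes only (A)--(H), so strictly speaking your argument proves the result under (A)--(I). The paper instead appeals to the Taylor-expansion bounds (5.5)--(5.6) of \cite{fan04}, in which the penalty contribution is controlled through $a_n=\max_{j\le k_n} p'_{\lambda_n}(|\beta_{0j}|)$; for SCAD that quantity is only negligible when the nonzero coefficients are large relative to $\lambda_n$, so the paper's citation implicitly leans on the same kind of requirement, and under (A)--(H) alone (where some $|\beta_{0j}|$ may be of order $\lambda_n$) neither argument closes, since the first-order penalty term $\sqrt{k_n}\,\alpha_n\lambda_n$ need not be dominated by $\alpha_n^2\rho_{1n}$ given that (G) forces $\lambda_n\gg\alpha_n$. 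In short: your proof is sound and matches the paper's, but you should flag that the flat-region step consumes condition (I); this appears to be an imprecision in the theorem's stated hypotheses rather than a defect peculiar to your argument.
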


\begin{thm}\label{th:oracle}
(Oracle property) Let $\hat{\beta}=(\hat{\beta}_1^T,\hat{\beta}_2^T)^T$ be the local minimizer as stated in Theorem \ref{th:consistency}, where $\hat{\beta}_1$ is $k_n$ dimensional and $\hat{\beta}_2$ is $s_n$ dimensional. Under conditions (A)-(I), we have
\begin{itemize}
\item[(i)] $\hat{\beta}_2^T=0$ with probability converging to $1$.
\item[(ii)] For any $p_n$ dimensional vector $\gamma_n$ with unit norm, 
	\begin{equation*}
		\gamma_n^TP_n^{-1/2}(\hat{\beta_1}-\beta_{10})\rightarrow N(0,1)
	\end{equation*}
where 
\begin{equation*}
P_n=[(1,-\beta_{10}^T)\otimes Q^{-1}]\,\Phi_{p_n} G_n\Phi^T_{p_n} \,[(1,-\beta_{10}^T)\otimes Q^{-1}]^T
\end{equation*}
\end{itemize}
\end{thm}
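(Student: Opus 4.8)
The plan is to prove the two parts separately, throughout conditioning on the root-consistent local minimizer $\hat\beta$ supplied by Theorem~\ref{th:consistency}, for which $\|\hat\beta-\beta_0\|=O_p(p_n^2/(\sqrt n\rho_{1n}))$. Note that condition (G) gives $p_n^2/(\sqrt n\rho_{1n}\lambda_n)\to0$, hence $\|\hat\beta-\beta_0\|=o_p(\lambda_n)$, a fact I will use repeatedly.

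For part (i) I would run the standard gradient-domination argument. Writing the loss part of $S(\beta)$ as $L(\beta)=\int_0^1(\hat m_1'-\beta^T\hat m)^2w\,dx$, we have $\partial L/\partial\beta=2(\hat Q\beta-\hat b)$. For any index $j>k_n$, at a point $\hat\beta$ with $\hat\beta_j\neq0$ the penalty is differentiable and $\partial S/\partial\beta_j=2(\hat Q\hat\beta-\hat b)_j+p'_{\lambda_n}(|\hat\beta_j|)\,\mathrm{sgn}(\hat\beta_j)$. Decomposing $\hat Q\hat\beta-\hat b=\hat Q(\hat\beta-\beta_0)+(\hat Q-Q)\beta_0-(\hat b-b)$ and using the population normal equation $Q\beta_0=b$, I would bound the first piece by $\lambda_{\max}(\hat Q)\|\hat\beta-\beta_0\|=O_p(\rho_{2n}p_n^2/(\sqrt n\rho_{1n}))$ and the remaining two by the $\sqrt n$-rate of the functional estimates from Theorem~\ref{th:an}. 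Dividing by $\lambda_n$ and invoking condition (G), in particular $\rho_{2n}p_n^2/(\sqrt n\rho_{1n}\lambda_n)\to0$ and $p_n^2/(\sqrt n\rho_{1n}\lambda_n)\to0$, yields $\max_{j>k_n}|(\hat Q\hat\beta-\hat b)_j|=o_p(\lambda_n)$. Since $\|\hat\beta-\beta_0\|=o_p(\lambda_n)$ forces $|\hat\beta_j|<\lambda_n$ eventually, the SCAD derivative satisfies $p'_{\lambda_n}(|\hat\beta_j|)=\lambda_n$, which dominates, so $\mathrm{sgn}(\partial S/\partial\beta_j)=\mathrm{sgn}(\hat\beta_j)$ with probability tending to one. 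Thus $S$ is strictly monotone in $\beta_j$ on either side of zero, a local minimizer must have $\hat\beta_j=0$, and $\hat\beta_2=0$ with probability $\to1$.

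For part (ii) I would work on the event $\{\hat\beta_2=0\}$, whose probability $\to1$ by (i). Consistency together with condition (I), $\min_k|\beta_{0k}|/\lambda_n\to\infty$, guarantees $\min_k|\hat\beta_{1k}|>a\lambda_n$ with probability $\to1$, so by the flatness of the SCAD penalty beyond $a\lambda_n$ the penalty gradient on the support vanishes. Hence the stationarity condition for the remaining $k_n$ coordinates reduces to the unpenalized restricted normal equation $\hat Q_{11}\hat\beta_1=\hat b_1$, with $\hat Q_{11}$ the leading $k_n\times k_n$ block of $\hat Q$ and $\hat b_1$ the leading block of $\hat b$. Using the population identity $Q_{11}\beta_{10}=b_1$ I linearize to $\hat\beta_1-\beta_{10}=\hat Q_{11}^{-1}[(\hat b_1-b_1)-(\hat Q_{11}-Q_{11})\beta_{10}]$ and replace $\hat Q_{11}^{-1}$ by $Q_{11}^{-1}$ up to a negligible remainder. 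The bracket is a fixed linear functional of $vech(\hat M)-vech(M)$, so $\hat\beta_1-\beta_{10}=T_n(vech(\hat M)-vech(M))+o_p(\cdot)$ for the linear map $T_n$ assembled from $(1,-\beta_{10}^T)\otimes Q^{-1}$ and $\Phi_{p_n}$; propagating the covariance $G_n$ of Theorem~\ref{th:an} through $T_n$ produces exactly $P_n=T_nG_nT_n^T$.

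To obtain the scalar limit I use the Cramér--Wold-type form already available in Theorem~\ref{th:an}. For unit $\gamma_n$ put $\tilde\gamma_n=G_n^{1/2}T_n^TP_n^{-1/2}\gamma_n$; then $\|\tilde\gamma_n\|^2=\gamma_n^TP_n^{-1/2}(T_nG_nT_n^T)P_n^{-1/2}\gamma_n=1$, and $\gamma_n^TP_n^{-1/2}(\hat\beta_1-\beta_{10})=\tilde\gamma_n^TG_n^{-1/2}(vech(\hat M)-vech(M))+o_p(1)$ converges to $N(0,1)$ by Theorem~\ref{th:an}. I expect the main obstacle to be uniformity in the diverging dimension: controlling the linearization remainder, both the quadratic contribution from $\hat Q_{11}^{-1}-Q_{11}^{-1}$ and the cross terms, so that it is $o_p(1)$ after multiplication by $P_n^{-1/2}$, which requires a lower bound on $\lambda_{\min}(P_n)$ in terms of $\rho_{1n}$ and the eigenvalues of $G_n$. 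Securing this, together with the uniform-in-$j$ gradient bound of part (i), is where conditions (E)--(G) must be used most carefully.
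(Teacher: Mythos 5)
Your proposal is correct and follows the same two-stage strategy as the paper's own proof: part (i) is essentially the paper's Lemma \ref{lem:vsconsistency} verbatim (same decomposition $\hat Q\hat\beta-\hat b=\hat Q(\hat\beta-\beta_0)+(\hat Q-Q)\beta_0-(\hat b-b)$, same appeal to condition (G) to let the SCAD derivative $\lambda_n$ dominate and force the sign of $\partial S/\partial\beta_j$ to match that of $\beta_j$), and part (ii) is the same linearize-the-normal-equations argument concluded via Theorem \ref{th:an}. The one genuine difference is how part (ii) is set up. The paper treats the \emph{full} system: it writes $\nabla S(\hat\beta)=0$ as $\hat Q\hat\beta=\hat b$ in all $p_n$ coordinates and inverts the full $Q$, obtaining $\hat\beta-\beta_0=Q^{-1}(\hat M-M)\binom{1}{-\beta_0}$ plus higher order terms. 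You instead use only the stationarity on the support, $\hat Q_{11}\hat\beta_1=\hat b_1$, and invert $Q_{11}$. Your version is in fact the more rigorous one: at coordinates with $\hat\beta_j=0$ the SCAD penalty is not differentiable, so only a subgradient inequality (bounded by $\lambda_n$) holds there, and the paper's full-system equation is not literally valid. The price is a small mismatch you elide: your restricted linearization yields $Q_{11}^{-1}$ and a covariance built from the restricted blocks of $\hat M-M$, whereas the stated $P_n$ contains the full $Q^{-1}$ and $\Phi_{p_n}$, and the two coincide only when $Q$ has the appropriate block structure; asserting that your $T_n$ ``produces exactly $P_n$'' skips this. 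Given that the paper's own $P_n$ is dimensionally loose anyway (a $p_n\times p_n$ matrix paired with the $k_n$-dimensional $\hat\beta_1-\beta_{10}$), your restricted reading is the natural repair rather than an error. Finally, your explicit Cram\'er--Wold normalization $\tilde\gamma_n=G_n^{1/2}T_n^TP_n^{-1/2}\gamma_n$ with $\|\tilde\gamma_n\|=1$ correctly spells out what the paper compresses into ``the conclusion follows directly from Theorem \ref{th:an}.''
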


The above theorem states that when $n$ is large, the zero coefficients are estimated as zero with high probability. The asymptotic distribution of the nonzero coefficients is the same as when the zero coefficients are known in advance, if the same nonparametric estimates are used in the first step. This fact can be seen easily from the proof in section 5 since the proof follows roughly the same lines whether or not the zero coefficients are known.  Note our oracle property is conditioned on the estimates of the solutions from the first step, which is not as clean as the oracle property stated in \cite{fan01}, for example.

\section{Numerical examples}
\subsection{Simulation}
First we illustrate some of the computational properties of our estimates with a simulation study. The functions $m$ are generated as follows. For $n=50, 100, 200$ time points, we use an even number of variables
$p_n=2r_n$ with $r_n=[n^{1/2}]$ and $[\cdot]$ denotes the integer part of a number. Note that asymptotically, $p_n$ used in the simulation does not tally with the assumptions used for theoretical investigations. The observation time points are $\{1/n,2/n,\ldots,1\}$. The $p_n\times
p_n$ coefficient matrix $A$ is generated as follows.
\[A_{2i-1,2i-1}=A_{2i,2i}=a_i, A_{2i-1,2i}=-A_{2i,2i-1}=b_i, i=1,\ldots,r_n,\]
\[A_{i,j}=0 \mbox{ all other } i,j,\]
\[a_i\stackrel{iid}{\sim} Uniform(-4,0), b_i\stackrel{iid}{\sim} Uniform(-10,10).\]
The structure of $A$ has the form
\[A=\left[\begin{array}{ccccccc}
a_1&b_1&0&0&\cdots&&\\
-b_1&a_1&0&0&\cdots&&\\
0&0&a_2&b_2&\cdots&&\\
0&0&-b_2&a_2&\cdots&&\\
\vdots&\vdots&\vdots&\vdots&\ddots&&\vdots\\
&&&&a_{r_n}&b_{r_n}\\
&&&&-b_{r_n}&a_{r_n}
\end{array}\right]\]
and the system of differential equations is written in matrix form as
\begin{equation}\label{eqn:sim}
m'=Am
\end{equation}
with $m(x)=(m_1(x),\ldots,m_{p_n}(x))^T$. 

We generate $m(0)$ from the uniform
distribution and solve the initial value differential equation
problem using the simple Euler's method. The solution is evaluated
at those $n$ time points and independent normal noise with
standard deviation $\sigma=0.1$ is added at each time point.

By the data generation mechanism, the evolution of one variable only
depends on the value of itself as well as one other
variable. The coefficient values $a_i$ are chosen to be negative so
that the solution of the differential equations is asymptotically
stable to avoid numerical problems.

In our experiment, we use $100$ samples  for each $n=50, 100, 200$ generated from  model (\ref{eqn:sim}). The two step estimator with SCAD penalty is applied. We use standard cross-validation to choose the bandwidth which leads to good empirical results. Cross-validation is also used in the second step. We compare its performance with another regression procedure in which one directly uses the noisy observations as covariates and finite differences as derivatives. That is,
the model is fitted by solving the following problem (showing only the first equation of the linear system)
\begin{eqnarray}\label{eqn:ts}
\hat{\beta}_{TS}=\arg\min_\beta \sum_{i=2}^n (\frac{Y_{i1}-Y_{i-1,1}}{X_i-X_{i-1}}-\sum_j \beta_j Y_{ij})^2+\sum_j p_{\lambda_n}(|\beta_j|).
\end{eqnarray}
Since this is similar to a discrete time series model, we call its minimizer the TS estimator. The results are shown in Table \ref{tab:sim}, where we consider several estimators: two-step estimator with SCAD penalty (SCAD), two-step estimator with no penalty (OLS), two-step estimator using only the two variables with nonzero coefficients (ORACLE),  (\ref{eqn:ts}) with SCAD penalty (TS-SCAD),  (\ref{eqn:ts}) using only the two covariates with nonzero coefficients (TS-ORACLE). The average mean squared errors (AMSE) shown in the table are the errors for the two nonzero coefficients only. We also show the average number of nonzero coefficients for the SCAD estimator. Estimation using  (\ref{eqn:ts}) produces much worse results compared to the two-step estimator which reduces the noise contained in the observed solutions.  We also see that the number of nonzero coefficients selected is close to the true value.

\begin{table}
\caption{AMSE for the simulation study. Numbers inside the brackets are the corresponding standard errors}
\label{tab:sim}
\begin{tabular}{lcccccc}
\hline
n & SCAD&OLS&ORACLE&TS-SCAD&TS-ORACLE&Average number of \\
				&&&&&&	nonzero coefficients \\
\hline
50 & 2.7 (0.44) &5.3 (1.42) & 2.4 (0.41) &6.2 (2.8)&7.5 (2.5) & 2.5\\
100& 2.6 (0.46) & 7.8 (0.77)& 2.4 (0.73)&8.2 (2.4)&12.7 (4.3) & 2.3\\
200& 2.9 (0.53) & 14.5 (3.2)& 2.1 (0.68)&20.7 (5.2)&13.0 (3.9)& 2.9\\
\hline
\end{tabular}
\end{table}

\subsection{Real data example}
Statistical inference of genetic regulatory networks is essential for understanding temporal interactions of regulatory elements inside the cells. For inferences of large networks, identification
of network structure is typically achieved under the assumption of sparsity of the networks. The increasing amount of high-throughput time course data has
provided biologists a window to the understanding of the
biomolecular mechanism of different species. The expression of
genes in these studies are indicative of the dynamic activities
occurring inside the organism. Such regulatory activities involve
complicated temporal interactions among different gene products,
forming genetic networks indicating the causal relationships
between different elements. 

We demonstrate the performance of the our penalized functional
model with the application to the cell cycle regulatory network of
Saccharomyces cerevisiae. The dataset comes from \cite{spellman98} which
provides a comprehensive list of cell cycle regulated genes
identified by time course expression analysis. We use the 24 unequally spaced time
points of the cdc15 synchronized expression data. Same as \cite{nam07}, we consider 20
genes including 4 transcription factors known to be involved in
regulatory functions during different stages of the cell cycle.

We apply our approach to this dataset. Only the part of the coefficent matrix showing the interactions
between each of the 20 genes and four transcription factor is presented in Table
\ref{tab:cycle}, and we compare the result with known
interactions retrieved from the YEASTRACT database \cite{teixeira06} and treat those as the background truth. For
this submatrix, we get \emph{PPV (positive predictive value)}=0.54 and \emph{sensitivity}=0.83. Since all statistical models are merely mathematical approximations to the
true world, it is plausible that automatically chosen model
undersmoothes the coefficients matrix to provide a better fit to
the data. One can also manually specify the smoothing
parameter in place of cross-validation to achieve desired sparsity of the networks.

\begin{table}[!t]
\caption{The reconstructed network structure with PPV=0.54 and Sensitivity=0.83. The interactions retrieved from database are denoted by '$\Box$' and the interactions inferred by the model are denoted by '$\times$'. \label{tab:cycle}}
\center{\begin{tabular}{l|cccc}\hline
      &    ace2     &    fkh1     &    swi4     &      swi5\\\hline
ace2  & $\boxtimes$ & $\boxtimes$ &             &              \\
fkh1  &             & $\boxtimes$ &             &              \\
swi4  &             & $\times$    & $\boxtimes$ &              \\
swi5  & $\times$    & $\boxtimes$ & $\times$    &   $\boxtimes$\\
sic1  & $\boxtimes$ & $\times   $ &             &   $\Box$     \\

cln3  &             &             & $\Box$      &   $\boxtimes$     \\
far1  &             &             &             &   $\times$   \\
cln2  &             &             & $\boxtimes$ &              \\
cln1  & $\times$    & $\Box$      & $\boxtimes$ &              \\
clb6  & 	    &             & $\boxtimes$ &   	       \\

clb5  &             &             & $\boxtimes$ &   $\times$   \\
gin4  &             &             & $\boxtimes$ &   $\times$   \\
swe1  &             &             & $\boxtimes$ &              \\
clb4  &             & $\boxtimes$ &             &              \\
clb2  & $\times$    & $\boxtimes$ & $\Box$      &              \\

clb1  &             & $\boxtimes$ & $\boxtimes$ &              \\
tem1  & 	    & $\boxtimes$ & $\times$    &   $\times$   \\
apc1  &             & $\times$    & $\times$    &   $\times$   \\
spo12 &             &             &             &   $\times$   \\
cdc20 & $\times$    & $\boxtimes$ &             &   $\times$   \\
\hline
\end{tabular}}{}
\end{table}

\section{Conclusions}
In this paper we studied the asymptotic properties of the two-step estimator in high-dimensional linear differential equations, when the size of the linear system diverges with the density of observed time points. Using local polynomial estimates in the first step combined with penalized regression in the second step, we have shown that the estimators correctly identify zero coefficients with probability converging to one and the estimators with nonzero coefficients are asymptotically normal with parametric convergence rates.  Since the covariates are observed with noise, the situation is similar to the errors-in-variables model where pretending the true covariates to be known will lead to estimators that are not even consistent. Thus it is crucial that only the smoothed solutions are plugged into the least square problem.

The most severe theoretical restriction comes from assumption (E). With the choice of $h=O(n^{-1/5})$ for example, the assumption $\sqrt{n}{p_n}(h^3+\frac{1}{nh^2})\rightarrow 0$ imposes the condition $p_n=o(n^{-1/10})$. This condition is used in the proof of Theorem \ref{th:an} to show the asymptotic normality of integral functionals. We suspect that this condition can be relaxed with more careful calculation.

Although we only focus on the case where the $\sqrt{n}$ rate is achieved, this of course depends on the smoothness of the solution as well as choice of bandwidth. In other situations, it might happen that the integral functionals converge with a different rate, which will also slow down the rate of the linear coefficient estimates. A comprehensive treatment considering all possible cases is beyond the scope of the current paper.

\section{Proofs}
First we investigate the asymptotic properties of the integral functionals $\int m_1'(x)m_i(x)w(x)dx$ and $\int m_i(x)m_j(x)w(x)dx$. The following Lemmas give asymptotic bias and variance of the local polynomial estimators and their proofs are similar to those found in \cite{huangfan99}, which only studied the quadratic functional $\int [m_i^{(d)}(x)]^2w(x)dx$.

We need to introduce more notations before presenting the lemmas. Let $A_1$ be the $n\times n$ matrix with the $(i,j)$-entry $a_{1ij}=\int W_0(\frac{X_i-x}{h})W_1(\frac{X_j-x}{h})w(x)dx$. Similarly, let $A_2$ be the $n\times n$ matrix with the $(i,j)$-entry $a_{2ij}=\int W_0(\frac{X_i-x}{h})W_1(\frac{X_j-x}{h})w(x)dx$. The matrix $A_2$ is symmetric while $A_1$ is not. Let $B_1$ denote the symmetrized version of $A_1$, i.e., $B_1=(A_1+A_1^T)/2$. 

\begin{lemma}\label{lem:trace}
Under conditions (A)-(C) together with $nh\rightarrow \infty$, conditioning on the random time points $\{X_i\}_{i=1}^n$,
\begin{eqnarray*}
tr(A_1)&=&(C+o_p(1))\frac{1}{nh^2}\\
tr(A_1^TA_1)&=&(C+o_p(1))\frac{1}{n^2h^3}\\
tr(A_2)&=&(C+o_p(1))\frac{1}{nh}\\
tr(A_1^TA_2)&=&(C+o_p(1))\frac{1}{n^2h^2}\\
tr(A_2^2)&=&(C+o_p(1))\frac{1}{n^2h}
\end{eqnarray*}
where in the above expressions, different appearances of $C$ denotes different constants depending on $K$ and $w$. Similar observation applies to the next lemma as well.
\end{lemma}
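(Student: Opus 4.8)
The plan is to read every trace off the \emph{equivalent-kernel} representation of the local polynomial weights. Writing $\mathbf{u}=(1,u,\dots,u^s)^T$, $\mu_k=\int u^kK(u)\,du$, $\nu_k=\int u^kK^2(u)\,du$, $S=(\mu_{j+l})_{0\le j,l\le s}$ and $\tilde S=(\nu_{j+l})_{0\le j,l\le s}$, and $H=\mathrm{diag}(1,h,\dots,h^s)$, the uniform design ($f\equiv1$) together with $nh\to\infty$ gives $X^TWX=(1+o_p(1))\,nh\,HSH$ and $X^TW^2X=(1+o_p(1))\,nh\,H\tilde SH$ at any interior point $x$, whence
\[
W_d\!\left(\tfrac{X_i-x}{h}\right)=(1+o_p(1))\,\frac{1}{nh^{d+1}}\,K_d^*\!\left(\tfrac{X_i-x}{h}\right),\qquad K_d^*(u)=e_d^TS^{-1}\mathbf{u}\,K(u).
\]
Condition (C) is exactly what lets me discard the boundary region: since $w$ and its first derivatives vanish at $\{0,1\}$ and $K$ is supported on $[-1,1]$, the interior equivalent kernel is valid wherever the integrand is nonzero once $h$ is small, so no boundary kernel is needed. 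Here $A_1$ pairs a $W_0$ with a $W_1$ (hence non-symmetric) and $A_2$ pairs two $W_0$'s (hence symmetric).

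For the single traces I use the exact identity $\sum_iW_d(\tfrac{X_i-x}{h})W_{d'}(\tfrac{X_i-x}{h})=(G^{-1}H_2G^{-1})_{dd'}$ with $G=X^TWX$, $H_2=X^TW^2X$ localized at $x$, so that $\mathrm{tr}(A_2)=\int(G^{-1}H_2G^{-1})_{00}\,w\,dx$ and $\mathrm{tr}(A_1)=\int(G^{-1}H_2G^{-1})_{01}\,w\,dx$. Substituting the asymptotic forms gives $(G^{-1}H_2G^{-1})_{dd'}=(1+o_p(1))\,(nh^{d+d'+1})^{-1}(S^{-1}\tilde SS^{-1})_{dd'}$, which is asymptotically $x$-independent in the interior; integrating this prefactor against $w$ produces a factor $\int w$ and yields $\mathrm{tr}(A_2)=(C+o_p(1))/(nh)$ and the order $\mathrm{tr}(A_1)=O_p(1/(nh^2))$.

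For the three Frobenius-type traces I write, for example, $\mathrm{tr}(A_1^TA_1)=\sum_{i,j}a_{1ij}^2$ as a double integral in $x,y$ and perform the sums over $i$ and over $j$ separately by the law of large numbers, obtaining convolution kernels
\[
\sum_iW_{d}\!\left(\tfrac{X_i-x}{h}\right)W_{d'}\!\left(\tfrac{X_i-y}{h}\right)=(1+o_p(1))\,\frac{1}{nh^{d+d'+1}}\,\Psi_{dd'}\!\left(\tfrac{x-y}{h}\right),\quad \Psi_{dd'}(z)=\int K_d^*(s)K_{d'}^*(s+z)\,ds,
\]
which are concentrated on $|x-y|\le 2h$. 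The substitution $y=x+hz$ then collapses the remaining double integral to $h\,(\int\Psi_{\cdot\cdot}\Psi_{\cdot\cdot}\,dz)(\int w^2)$, supplying one extra power of $h$; counting powers gives $\mathrm{tr}(A_1^TA_1)\sim 1/(n^2h^3)$, $\mathrm{tr}(A_1^TA_2)\sim 1/(n^2h^2)$ and $\mathrm{tr}(A_2^2)\sim 1/(n^2h)$, each with an explicit constant.

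I expect the main obstacle to be twofold. First, both layers of approximation — replacing the exact weights by their equivalent kernels, and replacing the design sums by $n\int(\cdot)f$ — must be shown to contribute only $o_p$ of the stated order; the law-of-large-numbers step relies on $nh\to\infty$ to guarantee enough design points per bandwidth window, and the concentration of $\Psi_{dd'}(\tfrac{x-y}{h})$ near $x=y$ must be quantified to justify collapsing the double integrals. Second, and more delicate, is the moment/parity bookkeeping for the single traces: because $K$ is symmetric, $S$, $\tilde S$ and hence $S^{-1}\tilde SS^{-1}$ are checkerboard, so $K_0^*$ is even and $K_1^*$ is odd; the diagonal $(0,0)$ entry driving $\mathrm{tr}(A_2)$ therefore survives with a genuine nonzero constant, but the $(0,1)$ entry entering $\mathrm{tr}(A_1)$ cancels at leading order, so for $\mathrm{tr}(A_1)$ the content is the order bound $O_p(1/(nh^2))$ that the subsequent bias analysis actually requires, rather than a non-vanishing leading constant.
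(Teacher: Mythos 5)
Your machinery---interior equivalent kernels $K_d^*$, the exact identity $\sum_i W_dW_{d'}=(G^{-1}H_2G^{-1})_{dd'}$ for the diagonal traces, and the convolution kernels $\Psi_{dd'}$ with the substitution $y=x+hz$ for the Frobenius-type traces---is essentially the same calculation the paper outsources to \cite{huangfan99}, so in approach you and the paper agree. Your parity observation for $tr(A_1)$ is also correct, and it is a point the paper glosses over: since $K_0^*$ is even and $K_1^*$ is odd, $\int K_0^*K_1^*\,du=0$, so the constant multiplying $1/(nh^2)$ vanishes and what can actually be proved is $tr(A_1)=o_p(1/(nh^2))$; the lemma's display is then correct only if one allows $C=0$, i.e., reads it as an order bound, which is all that Lemma \ref{lem:bv} and condition (E) ever use. (The cited source never meets this issue because it only treats products of two weights of the \emph{same} order $d$, where the constants are integrals of squares.)

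The genuine gap is that you do not carry this parity bookkeeping through consistently: it kills the constant of the cross trace $tr(A_1^TA_2)$ as well, yet you claim that trace comes "with an explicit constant." Indeed $tr(A_1^TA_2)=\sum_{i,j}a_{1ij}a_{2ij}$, and your own convolution step reduces its leading term to
\begin{equation*}
\frac{1}{n^4h^3}\sum_{i,j}w^2(X_i)\,\Psi_{01}\!\left(\frac{X_j-X_i}{h}\right)\Psi_{00}\!\left(\frac{X_j-X_i}{h}\right)
\;\approx\;\frac{1}{n^2h^2}\left(\int_0^1 w^2\right)\int \Psi_{00}(z)\Psi_{01}(z)\,dz,
\end{equation*}
but $\Psi_{00}$ is even in $z$ while $\Psi_{01}$ is odd (a one-line consequence of the parities of $K_0^*$ and $K_1^*$), so this integral is zero---the same cancellation as $\Psi_{01}(0)=\int K_0^*K_1^*=0$ that you invoked for $tr(A_1)$. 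Hence $tr(A_1^TA_2)=o_p(1/(n^2h^2))$, and your assertion of a nonvanishing constant there is false; only $tr(A_2)$, $tr(A_1^TA_1)$ and $tr(A_2^2)$, whose constants are integrals of squares such as $\int(\Psi_{01})^2$ and $\int(\Psi_{00})^2$, genuinely attain the stated rates. To finish, either prove all five statements as upper bounds of the stated orders (which suffices downstream) or chase the next-order terms, where further cancellations occur, e.g. $\int_0^1 ww'=(w^2(1)-w^2(0))/2=0$ by condition (C). A smaller slip: condition (C) does not make the integrand vanish on the strips within $h$ of $0$ and $1$---$w$ is nonzero there, merely $O(h^3)$ by Taylor expansion---so the boundary cannot literally be "discarded"; you need the crude bounds $W_d=O_p(1/(nh^{d+1}))$, valid also at boundary points, combined with that $O(h^3)$ smallness to show those strips are negligible.
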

\begin{proof}
The calculations for $tr(A_2)$ and $tr(A_2^2)$ are special cases studied in \cite{huangfan99}. In particular, the calculations are contained in the proof of their Theorem 4.1, equations (7.3) and (7.19). The proofs for all other cases are similar and omitted. 
\end{proof}
\begin{lemma}\label{lem:bv}
Let $\theta_1=\int m_1'm_1w$ and $\theta_2=\int m_1'm_2w$, $\theta_3=\int m_1m_1w$ and $\theta_4=\int m_1m_2w$. Use $\hat{\theta}_1, \ldots, \hat{\theta}_4$ for the corresponding  estimated version with true functions replaced with local polynomial estimates. Under the same conditions as stated for Lemma \ref{lem:trace}, 
\begin{itemize} 
    \item[(a)] the asymptotic bias is
	\begin{eqnarray*}
	E(\hat{\theta}_1)-\theta_1&=&(C_1+o_p(1))h^3+(C_2+o_p(1))\frac{1}{nh^2}\\
	E(\hat{\theta}_2)-\theta_2&=&(C_1+o_p(1))h^3\\
	E(\hat{\theta}_3)-\theta_3&=&(C_1+o_p(1))h^4+(C_2+o_p(1))\frac{1}{nh}\\
	E(\hat{\theta}_4)-\theta_4&=&(C_1+o_p(1))h^4\\
	\end{eqnarray*}
    \item[(b)] the asymptotic variance is
	\begin{eqnarray*}
	Var(\hat{\theta}_1)&=&(C_1+o_p(1))\frac{1}{n^2h^3}+(C_2+o_p(1))\frac{1}{n}\\
	Var(\hat{\theta}_2)&=&(C_1+o_p(1))\frac{1}{n^2h^3}+(C_2+o_p(1))\frac{1}{n}\\	
	Var(\hat{\theta}_3)&=&(C_1+o_p(1))\frac{1}{n^2h}+(C_2+o_p(1))\frac{1}{n}\\
	Var(\hat{\theta}_4)&=&(C_1+o_p(1))\frac{1}{n^2h}+(C_2+o_p(1))\frac{1}{n}\\
	\end{eqnarray*}
	
    \item[(c)] similarly, we can calculate the covariances. For example, 
	\begin{equation*}
	Cov(\hat{\theta}_1,\hat{\theta}_2)=(C_1+o_p(1))\frac{1}{n^2h^3}+(C_2+o_p(1))\frac{1}{n}
	\end{equation*}
\end{itemize}
The above biases and variances are implicitly conditioned on the random time points $\{X_i\}_{i=1}^n$.
\end{lemma}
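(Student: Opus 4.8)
The plan is to turn every estimated functional into a quadratic or bilinear form in the Gaussian data vectors and then read off its mean and (co)variance from the exact moment formulas for such forms, using Lemma \ref{lem:trace} for the trace-type terms and local polynomial bias expansions for the smoothing-bias terms.

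The first step is an exact algebraic reduction. Writing $Y_j=(Y_{1j},\dots,Y_{nj})^T$ and recalling $\hat m_j(x)=\sum_i W_0(\frac{X_i-x}{h})Y_{ij}$ and $\hat m_j'(x)=\sum_iW_1(\frac{X_i-x}{h})Y_{ij}$, Fubini lets me carry the integral over $x$ inside the double sum, so that
\begin{equation*}
\hat\theta_1=Y_1^TB_1Y_1,\quad \hat\theta_3=Y_1^TA_2Y_1,\quad \hat\theta_2=Y_1^TA_1^TY_2,\quad \hat\theta_4=Y_1^TA_2Y_2,
\end{equation*}
where $A_1$, $A_2$ and $B_1=(A_1+A_1^T)/2$ are exactly the matrices introduced before the lemma (only the symmetric part $B_1$ of $A_1$ survives in the quadratic form $\hat\theta_1$). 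Conditioning throughout on $\{X_i\}$, condition (D) gives $Y_j=\mu_j+\epsilon_j$ with $\mu_j=(m_j(X_1),\dots,m_j(X_n))^T$ and independent $\epsilon_j\sim N(0,\sigma^2 I_n)$.

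The second step applies the Gaussian moment identities. For a quadratic form, $E[Y_1^TMY_1]=\mu_1^TM\mu_1+\sigma^2\,tr(M)$ and $Var(Y_1^TMY_1)=2\sigma^4\,tr(M^2)+4\sigma^2\mu_1^TM^2\mu_1$ for symmetric $M$; for a bilinear form in the \emph{independent} vectors $Y_1,Y_2$ the trace term in the mean is absent, giving $E[Y_1^TMY_2]=\mu_1^TM\mu_2$ and $Var(Y_1^TMY_2)=\sigma^4\,tr(M^TM)+\sigma^2\mu_1^TMM^T\mu_1+\sigma^2\mu_2^TM^TM\mu_2$. This dichotomy is what separates the diagonal functionals $\hat\theta_1,\hat\theta_3$ from the cross functionals $\hat\theta_2,\hat\theta_4$: the latter carry no variance-induced bias precisely because $\epsilon_1$ and $\epsilon_2$ are independent and centered, which is why their bias expansions in part (a) lack the $\frac{1}{nh^2}$ and $\frac{1}{nh}$ terms. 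All the trace terms are then supplied by Lemma \ref{lem:trace}: $\sigma^2\,tr(B_1)=\sigma^2\,tr(A_1)$ gives the $\frac{1}{nh^2}$ bias of $\hat\theta_1$ and $\sigma^2\,tr(A_2)$ the $\frac{1}{nh}$ bias of $\hat\theta_3$, while $tr(B_1^2)$, $tr(A_1^TA_1)$ and $tr(A_2^2)$ give the leading $\frac{1}{n^2h^3}$ and $\frac{1}{n^2h}$ variance terms. The quadratic-in-signal pieces $\mu^TM^2\mu$ are $O(1/n)$: each entry of the smoothed vector $M\mu$ is an equivalent-kernel average of the bounded signal normalised by $n$, so $\mu^TM^2\mu=\|M\mu\|^2=O(1/n)$, yielding the $\frac1n$ variance terms.

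The delicate step, and the main obstacle, is the deterministic smoothing bias $\mu_1^TM\mu_1-\theta$ (and $\mu_1^TM\mu_2-\theta$ for the cross functionals), which equals $\int\tilde m_1'\tilde m_1 w-\int m_1' m_1 w$ for $\hat\theta_1$ and the analogous products for $\hat\theta_2,\hat\theta_3,\hat\theta_4$, where $\tilde m$ denotes the noise-free local polynomial fit. I would expand, for instance, $\int\tilde m_1'\tilde m_1w-\int m_1'm_1w=\int(\tilde m_1'-m_1')m_1w+\int m_1'(\tilde m_1-m_1)w$ plus a bias$\times$bias remainder, substitute the interior local polynomial bias expansions under condition (B) ($m\in C^3$, degree $s=3$), and integrate; condition (C), which kills $w$ and its first two derivatives at the endpoints, is exactly what discards the larger-order boundary bias and leaves only the interior contribution. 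Tracking this expansion is where the $h^3$ rate for the functionals containing $m_1'$ and the $h^4$ rate for those containing only $m$ appear, and it is also where the passage from the discrete weighted sums to their integral limits (a Riemann-sum/law-of-large-numbers argument conditional on $\{X_i\}$, requiring $nh\to\infty$) and the accounting for leading constants must be carried out with care, exactly as in the quadratic-functional analysis of \cite{huangfan99} but now for the products $\int m_1'm_jw$ and $\int m_im_jw$. Finally, the covariances in part (c) are obtained from the very same moment identities applied to the appropriate cross terms and require no new ingredients.
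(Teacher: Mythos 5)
Your proposal follows essentially the same route as the paper's proof: reduce each $\hat\theta_k$ to a quadratic form $Y^TB_1Y$, $Y^TA_2Y$ or a bilinear form $Y^TA_1Z$, $Y^TA_2Z$ in the Gaussian data, apply the conditional moment identities for such forms (so that the trace terms from Lemma \ref{lem:trace} give the $\frac{1}{nh^2}$, $\frac{1}{nh}$ bias pieces and the $\frac{1}{n^2h^3}$, $\frac{1}{n^2h}$ variance pieces, the independence of $\epsilon_1,\epsilon_2$ kills the trace bias for the cross functionals, and the signal terms give the $\frac1n$ variance), and handle the deterministic smoothing bias $m_i^TCm_j-\theta$ by the local polynomial bias expansion as in \cite{huangfan99}. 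Your write-up is in fact somewhat more explicit than the paper's (which defers most of these calculations to \cite{huangfan99}), and the minor difference in how you apportion the $\frac{1}{n^2h^3}$ term between the trace and signal contributions does not affect the stated orders.
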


\begin{proof}
The calculation follows that of \cite{huangfan99} and we refer the reader to that paper for details,  giving here only some short explanations of the proof as well as pointing out the differences when dealing with non-quadratic forms which were not studied in \cite{huangfan99}. For ease of notation, within the current Lemma, we let $Y^T=(Y_1,\ldots,Y_n),$ and $Z^T=(Z_1,\ldots,Z_n)$ be the noisy observations for functions $m_1$ and $m_2$ respectively, with additive noise $\epsilon_1=(\epsilon_{11},\ldots,\epsilon_{n1})$ and $\epsilon_2=(\epsilon_{12},\ldots,\epsilon_{n2})$.

 Obviously we have $\hat{\theta}_1=Y^TB_1Y=m_1^TB_1m_1+2m_1B_1\epsilon_1+\epsilon_1^TB_1\epsilon_1$. Given $\{X_i\}$, the conditional expectation is $m_1^TB_1m_1+\sigma^2 tr(B_1)$. 
$m_1^TB_1m_1$ contributes to the $O_p(h^3)$ term in the bias and $tr(B_1)=O_p(\frac{1}{nh^2})$. For $\hat{\theta}_2=Y^TA_1Z=m_1A_1m_2+m_1A_1\epsilon_2+m_2^TA_1^T\epsilon_1+\epsilon_1A_1\epsilon_2$, since the noises $\epsilon_1$, $\epsilon_2$ are independent, the bias only comes from $m_1^TB_1m_2=O_p(h^3)$.

The conditional variance of $Y^TB_1Y$ is $4\sigma^2m_1B_1^2m_1+2\sigma^4 tr(B_1^2)$, where the first term comes from the variance of $2m_1B_1\epsilon_1$ and the second term comes from variance of $\epsilon_1B_1\epsilon_1$. Calculations show that the first term is of order $O_p(\frac{1}{n^2h^3}+\frac{1}{n})$ while the second term is $O_p(\frac{1}{n^2h^3})$. All other expressions are derived in the same way.

\end{proof}

\begin{proof}[Proof of Theorem \ref{th:an}]
We want to show the asymptotic normality of $vech(\hat{M})$. For convenience, we denote the components of $vech(\hat{M})$ by $\hat{u}_k, k=1,2,\ldots, p_n(p_n+3)/2$ and the components of $vech(M)$ by $u_k$. From the proof of Lemma \ref{lem:bv}, one can see that $\hat{u}_k$ is of the form
$\hat{u}_k=(m_i+\epsilon_i)^TC(m_j+\epsilon_j)$ with either $C=A_1$ or $C=A_2$ and $i$ possibly equals $j$. Thus $\hat{u}_k=m_iCm_j+m_i^TC\epsilon_j+m_j^TC^T\epsilon_i+\epsilon_i^TC\epsilon_j$. By the proof of the lemma, the first term is $u_k+O_p(h^3+\frac{1}{nh^2})$ and the last term is $O_p(\sqrt{(tr(C^2)})=O_p(\sqrt{\frac{1}{n^2h^3}})$. This is the  pseudo-quadratic situation as described in \cite{huangfan99} since the linear term, which is of order $O_p(\frac{1}{\sqrt{n}})$, dominates the quadratic term. Thus we have $vech(\hat{M})-vech(M)=V+R$, where each component of $V$ is a linear combination of the errors and of order $\frac{1}{\sqrt{n}}$ due to the calculation of Lemma \ref{lem:bv}, and each components of $R$ is of order $O_p(h^3+\frac{1}{nh^2})$. Let the asymptotic covariance matrix of $vech(\hat{M})-vech(M)$ be denoted by $G_n$, which is the dominating term in Lemma \ref{lem:bv}\,(b). Obviously  $\gamma_nG_n^{-1/2}V$ has an asymptotic standard normal distribution. The same will be true for $\gamma_nG_n^{-1/2}(vech(\hat{M})-vech(M))$ if $\gamma_nG_n^{-1/2}R=o_p(1)$. Since $||G_n^{-1/2}||$ is of oder $O_p(\sqrt{n})$, we have $\gamma_nG_n^{-1/2}R=O_p(\sqrt{n}p_n(h^3+\frac{1}{nh^2}))$. The results follow from assumption (E). 
\end{proof}

The following lemma bounds the eigenvalues of $\hat{Q}$.
\begin{lemma}\label{lem:eigen}
If $p_n/\sqrt{n}=o(\rho_{1n})$ and $p_n/\sqrt{n}=o(\rho_{2n})$, then $\lambda_{\max}(\hat{Q})/\rho_{2n}=O_p(1)$ and $\rho_{1n}/\lambda_{\min}(\hat{Q})=O_p(1)$.
\end{lemma}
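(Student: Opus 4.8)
The plan is to control the perturbation $\hat{Q}-Q$ in spectral norm and then invoke an eigenvalue perturbation inequality. Since both $Q$ and $\hat{Q}$ are symmetric (their entries $\int m_im_jw$ and their estimated versions are symmetric in $i,j$), Weyl's inequality gives $|\lambda_{\max}(\hat{Q})-\lambda_{\max}(Q)|\le\|\hat{Q}-Q\|_{op}$ and $|\lambda_{\min}(\hat{Q})-\lambda_{\min}(Q)|\le\|\hat{Q}-Q\|_{op}$, where $\|\cdot\|_{op}$ denotes the spectral norm. Bounding the spectral norm by the Frobenius norm, $\|\hat{Q}-Q\|_{op}\le\|\hat{Q}-Q\|_F$, it therefore suffices to show that $\|\hat{Q}-Q\|_F=O_p(p_n/\sqrt{n})$.

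First I would establish this Frobenius bound. The entries of $\hat{Q}-Q$ are precisely the entries of $\hat{M}-M$ of the form $\hat{\theta}-\theta$ with $\theta=\int m_im_jw$, whose mean squared error was computed in Lemma \ref{lem:bv}: the variance is of order $1/n$, and under condition (E) the squared bias is negligible relative to it, so each entry satisfies $E[(\hat{Q}_{ij}-Q_{ij})^2]=O(1/n)$. Summing over the $p_n^2$ entries gives $E\,\|\hat{Q}-Q\|_F^2=O(p_n^2/n)$, and Markov's inequality then yields $\|\hat{Q}-Q\|_F=O_p(p_n/\sqrt{n})$, hence $\|\hat{Q}-Q\|_{op}=O_p(p_n/\sqrt{n})$.

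With this in hand I would finish by combining the perturbation bound with condition (F). From Weyl's inequality, $\lambda_{\max}(\hat{Q})\le\lambda_{\max}(Q)+O_p(p_n/\sqrt{n})\le\rho_{2n}+O_p(p_n/\sqrt{n})$; dividing by $\rho_{2n}$ and using $p_n/\sqrt{n}=o(\rho_{2n})$ gives $\lambda_{\max}(\hat{Q})/\rho_{2n}=1+o_p(1)=O_p(1)$. Likewise $\lambda_{\min}(\hat{Q})\ge\lambda_{\min}(Q)-O_p(p_n/\sqrt{n})\ge\rho_{1n}-O_p(p_n/\sqrt{n})=\rho_{1n}(1-o_p(1))$, where the last step uses $p_n/\sqrt{n}=o(\rho_{1n})$; this also shows $\lambda_{\min}(\hat{Q})>0$ with probability tending to one, so inverting is legitimate and $\rho_{1n}/\lambda_{\min}(\hat{Q})\le 1/(1-o_p(1))=O_p(1)$.

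The main obstacle is the Frobenius bound, specifically making the $O(1/n)$ control of $E[(\hat{Q}_{ij}-Q_{ij})^2]$ hold \emph{uniformly} over the growing collection of $p_n^2$ entries. Lemma \ref{lem:bv} is stated for fixed entries, so I would need the constants $C_1,C_2$ appearing there to be bounded uniformly in $i,j$ (and in $n$), which in turn rests on uniform bounds on the functions $m_j$ and their derivatives. Once that uniformity is granted, the summation over entries and the Markov step are routine, and the rest of the argument is a direct application of the matrix perturbation inequalities above.
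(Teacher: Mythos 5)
Your proof is correct, and it reaches the same key intermediate fact as the paper --- that the spectral perturbation $\|\hat{Q}-Q\|_{op}$ is $O_p(p_n/\sqrt{n})$, after which the eigenvalue comparison with condition (F) is essentially identical --- but it gets there by a genuinely different device. The paper invokes the Gershgorin Circle Theorem: since each entry of $\hat{Q}-Q$ is $O_p(1/\sqrt{n})$, every Gershgorin disc is claimed to have center and radius of order $p_n/\sqrt{n}$, so the eigenvalues of the symmetric matrix $\hat{Q}-Q$ lie in $[-Cp_n/\sqrt{n},+Cp_n/\sqrt{n}]$ with high probability. You instead use Weyl's inequality together with the bound $\|\hat{Q}-Q\|_{op}\le\|\hat{Q}-Q\|_F$, controlling $E\|\hat{Q}-Q\|_F^2=\sum_{ij}E[(\hat{Q}_{ij}-Q_{ij})^2]=O(p_n^2/n)$ and applying Markov once. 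Both routes give the same rate, but yours is actually cleaner on the probabilistic bookkeeping: the Gershgorin argument needs the maximal row sum $\max_i\sum_j|\hat{Q}_{ij}-Q_{ij}|$ to be $O_p(p_n/\sqrt{n})$, and this does not follow from entrywise moment bounds plus a naive union bound over the $p_n$ rows (a first- or second-moment union bound leaves a stray factor of $p_n$); making it rigorous requires some concentration of the row sums, which the paper hides in the phrase ``with high probability for large enough constant $C$.'' Your Frobenius computation avoids any maximal inequality, since expectations simply add across entries. The uniformity caveat you flag --- that the constants in Lemma \ref{lem:bv} must be bounded uniformly over $i,j$, which rests on uniform bounds on the $m_j$ and their derivatives --- is real, but it is needed equally by the paper's Gershgorin argument, so it is a shared gap rather than a deficiency of your approach.
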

\begin{proof}
By the Gershgorin Circle Theorem (\cite{golub96} Theorem 7.2.1),   the eigenvalues of $\hat{Q}-Q$ lie inside the interval $[-C\frac{p_n}{\sqrt{n}},+C\frac{p_n}{\sqrt{n}}]$ with high probability for large enough constant $C$. By the assumption, we have $\lambda_{\min}(\hat{Q})\ge \lambda_{\min}(Q)-C\frac{p_n}{\sqrt{n}}\ge \rho_{1n}/2$. Same proof applies to for $\lambda_{\max}(\hat{Q})$.

%It follows that 
%\begin{equation*}
%||\hat{Q}^{-1}||=\lambda_{\max}(\hat{Q}^{-1})\le 2/\rho_{1n}
%\end{equation*}
\end{proof}

\begin{proof}[Proof of Theorem \ref{th:consistency}]

Let $\tau_n=C\frac{p_n^2}{\sqrt{n}\rho_{1n}}$. Following \cite{fan04}, we will show that for any $\epsilon>0$ we can find a large enough constant $C$ such that
\begin{equation}\label{eqn:consistency}
P\{\sup_{||u||=1} S(\beta_0+\tau_n u)>S(\beta_0)\}\ge 1-\epsilon.
\end{equation}
Simple calculations show that
\begin{eqnarray}
S(\beta_0+\tau_n u)-S(\beta_0)&\ge&-2\tau_nu^T(\hat{b}-\hat{Q}\beta_0)+\tau_n^2u^T\hat{Q}u\label{eqn:diff}\\
&&+\sum_{j=1}^{k_n} p_{\lambda_n}(\beta_{0j}+\tau_nu_j)-\sum_{j=1}^{k_n}p_{\lambda_n}(\beta_{0j}).\nonumber
\end{eqnarray}
Since $\beta_0$ minimizes $\int (m_1'(x)-\beta^Tm(x))^2w(x)dx$, we have the normal equation $Q\beta_0=b$.  Thus we can bound the first term on the right hand side of (\ref{eqn:diff}) as
\begin{eqnarray*}
|2\tau_nu^T(\hat{b}-\hat{Q}\beta_0)|&=&|2\tau_nu^T(\hat{b}-b-(\hat{Q}-Q)\beta_0)|\\
&=&O_p(\tau_n\frac{p_n^2}{\sqrt{n}}).
\end{eqnarray*}
By Lemma \ref{lem:eigen}, the second term $\tau_n^2u^T\hat{Q}u$ can be bounded below by $O_p(\tau_n^2\rho_{1n})$. Thus the second term dominates the first term  when $C$ is large enough.
Same as (5.5) and (5.6) in \cite{fan04} the contribution of the penalty terms are also dominated and (\ref{eqn:consistency}) is proved.
\end{proof}

To make the proof of Theorem \ref{th:oracle} less cluttered, we show the variable selection consistency in a separate Lemma.
\begin{lemma} \label{lem:vsconsistency}
Under the assumptions of Theorem \ref{th:oracle}, the local minimizer found in Theorem \ref{th:consistency} with $\hat{\beta}=(\hat{\beta}_1^T,\hat{\beta}_2^T)^T$ is consistent in variable selection: $\hat{\beta}_2=0$ with probability converging to one.
\end{lemma}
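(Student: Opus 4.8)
The plan is to follow the standard sparsity argument of \cite{fan01,fan04}, adapted to the plugged-in functional estimates. By Theorem \ref{th:consistency} the local minimizer satisfies $\|\hat\beta-\beta_0\|=O_p(\tau_n)$ with $\tau_n=p_n^2/(\sqrt n\,\rho_{1n})$, and condition (G) supplies the crucial relation $\tau_n=o(\lambda_n)$. I would fix an arbitrary index $j>k_n$ in the zero block and show that, on the event $\{\|\beta-\beta_0\|\le C\tau_n\}$, the partial derivative of the smooth (least-squares) part of $S$ is negligible compared with the penalty, so that $\mathrm{sgn}(\partial S/\partial\beta_j)=\mathrm{sgn}(\beta_j)$ whenever $\beta_j\neq0$. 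Since $S$ is continuously differentiable in $\beta_j$ away from $\beta_j=0$ (the SCAD penalty is smooth except at the origin), this sign information forces the coordinatewise minimum to sit at $\beta_j=0$, hence $\hat\beta_j=0$; repeating for every $j$ in the zero block yields $\hat\beta_2=0$.

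The key computation is to control $\partial L/\partial\beta_j=2(\hat Q\beta-\hat b)_j$, where $L$ denotes the integral part of $S$. Using the population normal equation $Q\beta_0=b$ already exploited in Theorem \ref{th:consistency}, I would decompose
\[
(\hat Q\beta-\hat b)_j=\big[(\hat Q-Q)\beta\big]_j+\big[Q(\beta-\beta_0)\big]_j-(\hat b-b)_j
\]
and bound the three pieces separately. The functional errors $(\hat b-b)_j$ are $O_p(1/\sqrt n)$ by Theorem \ref{th:an} and Lemma \ref{lem:bv}; the term $[Q(\beta-\beta_0)]_j$ is $O_p(\rho_{2n}\tau_n)$ via $\lambda_{\max}(Q)\le\rho_{2n}$ together with the consistency rate; and $[(\hat Q-Q)\beta]_j$ splits as $[(\hat Q-Q)\beta_0]_j+[(\hat Q-Q)(\beta-\beta_0)]_j$, the first being $O_p(k_n/\sqrt n)$ by Cauchy--Schwarz with $\|\beta_{10}\|\le C\sqrt{k_n}$ from condition (H), and the second a strictly lower-order remainder. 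Collecting terms and invoking the ratios $p_n^2/(\sqrt n\,\rho_{1n}\lambda_n)\to0$ and $\rho_{2n}p_n^2/(\sqrt n\,\rho_{1n}\lambda_n)\to0$ from condition (G) shows $\partial L/\partial\beta_j=o_p(\lambda_n)$. Because $|\beta_j|\le C\tau_n=o(\lambda_n)<\lambda_n$ eventually, the SCAD derivative equals the constant $\lambda_n$ on this range, so $\partial S/\partial\beta_j=\lambda_n\,\mathrm{sgn}(\beta_j)\{1+o_p(1)\}$, which indeed carries the sign of $\beta_j$.

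The main obstacle I anticipate is uniformity: the conclusion must hold simultaneously for all $s_n=p_n-k_n$ indices in the zero block and uniformly over the $\tau_n$-neighborhood, whereas the bounds above are stated coordinatewise. I would upgrade the pointwise $O_p(1/\sqrt n)$ control of the entries of $\hat b-b$ and of the row segments of $\hat Q-Q$ to a bound uniform in $j$ by exploiting the Gaussian noise assumption (D): as noted in the proof of Theorem \ref{th:an}, each such entry is a pseudo-quadratic form whose dominant part is a linear Gaussian functional, so a maximal inequality over the $O(p_n^2)$ entries costs only a $\sqrt{\log p_n}$ factor, which is absorbed by the slack in condition (G). The neighborhood-uniformity is automatic since every bound depends on $\beta$ only through $\|\beta-\beta_0\|\le C\tau_n$. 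Once this uniform control is in place, the sign argument applies to all zero coordinates at once, giving $\hat\beta_2=0$ with probability tending to one.
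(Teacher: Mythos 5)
Your overall strategy coincides with the paper's: examine the sign of $\partial S/\partial\beta_j$ for each $j$ in the zero block over the $\tau_n$-neighborhood supplied by Theorem \ref{th:consistency}, show the least-squares part of the gradient is $o_p(\lambda_n)$ while the SCAD derivative stays of exact order $\lambda_n$ (indeed $p'_{\lambda_n}(|\beta_j|)=\lambda_n$ when $|\beta_j|\le C\tau_n=o(\lambda_n)$), and conclude that the coordinatewise minimum must sit at $\beta_j=0$. Your decomposition $(\hat Q\beta-\hat b)_j=[(\hat Q-Q)\beta]_j+[Q(\beta-\beta_0)]_j-(\hat b-b)_j$ is algebraically equivalent to the paper's $(\hat Q-Q)\beta_0-(\hat b-b)+\hat Q(\beta-\beta_0)$ (both rest on the normal equation $Q\beta_0=b$), and your coordinatewise bounds are actually sharper than the paper's crude ones.

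The genuine gap is in how you handle uniformity. The paper never needs a maximal inequality: it bounds the Euclidean norm of the whole gradient vector, $\|\hat Q\beta-\hat b\|\le\|\hat Q-Q\|\,\|\beta_0\|+\|\hat b-b\|+\|\hat Q\|\,\|\beta-\beta_0\|=O_p\bigl(\tfrac{\rho_{2n}}{\rho_{1n}}\tfrac{p_n^2}{\sqrt n}\bigr)$, and since $|(\hat Q\beta-\hat b)_j|\le\|\hat Q\beta-\hat b\|$ for every $j$, uniformity over all $s_n$ zero coordinates (and over the neighborhood) is automatic; the last ratio in condition (G) then closes the argument. Your route instead takes a union bound over $O(p_n^2)$ entries and asserts that the resulting $\sqrt{\log p_n}$ factor ``is absorbed by the slack in condition (G).'' Condition (G) has no such slack: it imposes no upper bound on $\rho_{1n}$ relative to $p_n$, so one may have, say, $\rho_{1n}$ growing so fast that $p_n^2/(\sqrt n\,\rho_{1n}\lambda_n)\to0$ holds with $\lambda_n$ far smaller than $\sqrt{\log p_n}\,k_n/\sqrt n$; in that regime your required bound $\sqrt{\log p_n}\,k_n/\sqrt n=o(\lambda_n)$ fails even though the lemma's hypotheses all hold. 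The fix is exactly the paper's device: replace coordinatewise control plus a maximal inequality by a single $\ell_2$-norm (Frobenius/Markov-type) bound on $\hat b-b$ and $\hat Q-Q$, as in Lemma \ref{lem:eigen} and Lemma \ref{lem:bv}, which requires no Gaussian tail refinement and no logarithmic factor at all.
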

\begin{proof}
For the objective function $S(\beta)$ defined in (\ref{eqn:obj}), we have
\begin{equation}\label{eqn:der}
\frac{\partial S}{\partial\beta_j}=2(\hat{Q}\beta-\hat{b})+p_{\lambda_n}'(|\beta_j|)sgn(\beta_j).
\end{equation}
Consider $j=k_n+1,\ldots,p_n$. When $0<|\beta_j|<C\frac{p_n^2}{\sqrt{n}\rho_{1n}}$, we can bound
\begin{eqnarray*}
||\hat{Q}\beta-\hat{b}||&=&||\hat{Q}\beta_0-\hat{b}+\hat{Q}(\beta-\beta_0)||\\
&=&||(\hat{Q}-Q)\beta_0-(\hat{b}-b)+\hat{Q}(\beta-\beta_0)||\\
&\le&||\hat{Q}-Q||\,||\beta_0||+||\hat{b}-b||+||\hat{Q}||\,||\beta-\beta_0||\\
&\le&\frac{p_n^2}{\sqrt{n}}+\frac{p_n}{\sqrt{n}}+\rho_{2n}\frac{p_n^2}{\sqrt{n}\rho_{1n}}=O_p(\frac{\rho_{2n}}{\rho_{1n}}\frac{p_n^2}{\sqrt{n}}).\\
\end{eqnarray*}
Since $\frac{p_n^2}{\sqrt{n}\rho_{1n}\lambda_n}\rightarrow 0$, and $|\beta_j|\le C \frac{p_n^2}{\sqrt{n}\rho_{1n}}$, we have $\lim\inf p'_{\lambda_n}(|\beta_j|)/\lambda_n>0$ by the form of the SCAD penalty. So the penalty term in (\ref{eqn:der}) dominates. Thus 
\begin{eqnarray*}
\frac{\partial S}{\partial\beta_j}>0 ~~~~\mbox{for}~ 0<\beta_j<C\frac{p_n^2}{\sqrt{n}\rho_{1n}}\\
\frac{\partial S}{\partial\beta_j}<0 ~~~~\mbox{for}~ 0>\beta_j>-C\frac{p_n^2}{\sqrt{n}\rho_{1n}}\\
\end{eqnarray*}
which implies that the minimum is achieved at exactly $\beta_j=0$.
\end{proof}

\begin{proof}[Proof of Theorem \ref{th:oracle}]
Now that part (i) has been proved in Lemma \ref{lem:vsconsistency}, we focus on asymptotic normality. Since $\hat{\beta}_2=0$ with probability converging to one, we can concentrate on $\hat{\beta}_1$ and denote it simply as $\hat{\beta}$. First, by assumptions (G), (I) and Theorem \ref{th:consistency}, $\nabla p_{\lambda_n}(\hat{\beta}_1)=0$. From the fact that $\nabla S(\hat{\beta})=0$, it follows that
\begin{equation*}
-\hat{b}+\hat{Q}\hat{\beta}=0.
\end{equation*}
Together with $-b+Q\beta_0=0$, we have
\begin{equation*}
-(\hat{b}-b)+(\hat{Q}-Q)\beta_0+\hat{Q}(\hat{\beta}-\beta_0)=0,
\end{equation*}
or
\begin{eqnarray*}
\hat{\beta}-\beta_0&=&Q^{-1}[(\hat{b}-b)-(\hat{Q}-Q)\beta_0]+\mbox{higher order terms}\\
&=&Q^{-1}(\hat{M}-M){1\choose -\beta_0}.
\end{eqnarray*}
Since
\begin{eqnarray*}
Q^{-1}(\hat{M}-M){1\choose -\beta_0}&=&vec(Q^{-1}(\hat{M}-M){1\choose -\beta_0})\\
&=&[(1,-\beta_0^T)\otimes Q^{-1}] vec(\hat{M}-M)\\
&=&[(1,-\beta_0^T)\otimes Q^{-1}] \Phi_{p_n} vech(\hat{M}-M)
\end{eqnarray*}
the conclusion follows directly from Theorem \ref{th:an}.
\end{proof}

\bibliographystyle{plain}
\bibliography{papers,books}

\end{document}